\nonstopmode \numberwithin{equation}{section}
\theoremstyle{plain}
\newtheorem*{theorem*}{Theorem}
\newtheorem{thm}{Theorem}
\newtheorem{cor}{Corollary}
\theoremstyle{definition}
\newtheorem{definition}[thm]{Definition}
\newtheorem{example}{Example}
\newtheorem{prop}{Proposition}[section]
\newtheorem{remark}[thm]{Remark}
\newtheorem{theorem}{Theorem}
\newtheorem{lemma}{Lemma}
\newcounter{alphabet}
\newenvironment{Thm}[1][]{\refstepcounter{alphabet}%
	\bigskip%
	\noindent%
	{\bf Theorem \Alph{alphabet}}%
	\ifthenelse{\equal{#1}{}}{}{ (#1)}%
	{\bf .} \itshape}{\vskip 8pt}
\newcommand{\blem}{\begin{lemma}}
\newcommand{\elem}{\end{lemma}}
\newcommand{\bexer}{\begin{exe}}
\newcommand{\eexer}{\end{exe}}
\newcommand{\beq}{\begin{eqnarray}}
\newcommand{\eeq}{\end{eqnarray}}
\newcommand{\bthm}{\begin{theorem}}
\newcommand{\ethm}{\end{theorem}}
\newcommand{\bex}{\begin{example}}
\newcommand{\eex}{\end{example}}
\newcommand{\bdefi}{\begin{definition}}
\newcommand{\edefi}{\end{definition}}
\newcommand{\bprop}{\begin{prop}}
\newcommand{\eprop}{\end{prop}}
\newcommand{\bpf}{\begin{proof}}
\newcommand{\epf}{\end{proof}}
\def\be{\begin{equation}}
\def\ee{\end{equation}}
\newcommand{\ds}{\displaystyle}
\newcommand{\U}{\mathbb{U}}
\newcommand{\fracc}[2]{\mbox{\small$\displaystyle\frac{#1}{#2}$}}
\numberwithin{equation}{section}
\begin{document}
\title[Schwarz lemma]{ Schwarz lemmas for mappings with bounded Laplacian}

\author{Miodrag Mateljevi\'c}
\address{M. Mateljevi\'c, Faculty of mathematics, University of Belgrade, Studentski Trg 16, Belgrade, Republic of Serbia}
\email{miodrag@matf.bg.ac.rs}

\author{Adel Khalfallah}
\address{A. Khalfallah, King Fahd University of Petrolem and Minerals, Department of Mathematics and Statistics, Dhahran,
	Kingdom of Saudi Arabia.} \email{khelifa@kfupm.edu.sa}

\subjclass[2000]{Primary:  30C80, 31A05.}
\keywords{  Schwarz's Lemma, Poisson's equation, harmonic functions, subharmonic functions.
}

\begin{abstract}
	
	We establish some Schwarz type Lemmas for   mappings  defined on the unit disk with bounded Laplacian.  Then we apply these
	results to obtain boundary versions of the Schwarz lemma.
\end{abstract}

\maketitle \pagestyle{myheadings} \markboth{ M. Mateljevi\'c and A.
	Khalfallah}{ Schwarz  Lemmas  for mappings with bounded Laplacian}
\section{Introduction and preliminaries}
Motivated by the role of the Schwarz lemma in complex analysis and
numerous fundamental results, see for instance  \cite{mmjmaa2018,osserman} and references  therein,
in 2016, the first author \cite{RgSchw1}(a) has posted on ResearchGate the   project \enquote{Schwarz lemma, the Carath\'{e}odory and Kobayashi Metrics and Applications in Complex Analysis}\footnote{Motivated by  S. G. Krantz  paper \cite{krantz}.}.  Various discussions regarding the subject can also  be found  in the Q\&A section on ResearchGate under the question \enquote{What are the most recent versions of the Schwarz lemma ?}\cite{RgSchw1}(b)\footnote{The subject has been presented at Belgrade analysis seminar \cite{Bg_sem}.}. In this project  and  in  \cite{mmjmaa2018}, cf. also \cite{kavu}, we developed  the method  related to  holomorphic mappings with strip codomain (we refer to this method as  the approach  via the Schwarz-Pick lemma for holomorphic maps from the unit disc into a strip). It is worth mentioning that the Schwarz  Lemma has been generalized in various directions, see \cite{AzerOr0,kavu,Khal,MmSm1,zhu}.

Recently  X. Wang, J.-F. Zhu  \cite{XJ} and   Chen and Kalaj \cite{ChKa}  have studied boundary Schwarz lemma for solutions to Poisson's equation.
They improved  Heinz's theorem \cite {Heinz}  and   Theorem A below. We found that Theorem A  is a forgotten  result   of    H.  Hethcote \cite{heth}, published in 1977.

Note that  previously  B. Burgeth \cite{bu}   improved the above result of Heinz  and  Theorem \Alph{alphabet} for real valued  functions (it is easy to extend his result for complex valued  functions, see below) by removing the assumption $f(0)=0$ but it is overlooked in the literature.
Recently, M. Mateljevi\'c and M. Svetlik \cite{MmSm1} proved a Schwarz lemma for real harmonic functions with values in $(-1,1)$  using a completely different  approach than B. Burgeth \cite{bu}.
In this paper, we further develop the  method initiated in  \cite{MmSm1}. More precisely, we show if $f: \U \to (-1,1)$,   $f \in C^2(\mathbb{U})$ and continuous on  $\overline{\mathbb{U}}$, and   $|\Delta f|\leq  c$ on $\mathbb{U}$ for some $c>0$ then, the mapping  $\ds u=f \pm   \frac{c}{4} (1- |z|^2)$  is subharmonic or superharmonic and we  estimate the harmonic function  $P[u^*]$, see Theorem \ref{th3}. Next, we extend the previous result to complex valued functions, see Corollary \ref{cor4}. Finally, we establish  Schwarz Lemmas at the boundary for solutions to  $|\Delta f| \leq c $.  Our results are generalizations of Theorem 1.1 \cite{XJ} and Theorem 2 \cite{ChKa}.

\subsection{Notations and background}

In this paper  $\mathbb{U}$ denotes the open  unit disk $\{z \in \mathbb{C}\,:\, |z|<1 \}$, and $\mathbb{T}$ denotes the unit circle.

A real-valued function $u$, defined in an open subset $D$ of the complex plane $\mathbb{C}$, is harmonic if it satisfies Laplace's equation
$$\Delta u (z)= \frac{\partial^2 u}{\partial x^2}(z)+  \frac{\partial^2 u}{\partial y^2}(z)=0 \quad \quad (z=x+iy) .$$

A real-valued function $u \in \mathcal{C}^2(D)$ is called subharmonic if $\Delta u(z) \geq 0$ for all $z\in D$.

\noindent A complex-valued function $w=u+iv$ is harmonic if both $u$ and $v$ are real harmonic.\\

Let $P$ be the Poisson kernel, i.e., the function
$$ P(z,e^{i\theta})=\frac{1-|z|^2}{|z-e^{i\theta}|^2},$$
and let $G$ be the Green function on the unit disk, i.e., the function
$$G(z,w)=\frac{1}{2\pi} \log \left|\frac{1-z\overline{w}}{z-w} \right| \quad z,w\in \mathbb{U}, z \not = w. $$
 Let $\phi\in L^1(\mathbb{T})$  be an integrable function on the unit circle, then the function $P[\phi]$ given by
$$P[\phi](z)=\frac{1}{2\pi}\int_{0}^{2\pi} P(z,e^{i\theta}) \phi(e^{i\theta}) d\theta$$
 is harmonic in $\U$ and has a radial limit that agrees with $\phi$ almost everywhere on the boundary $\mathbb{T}$ of the unit disc.\\

For $g\ \in \mathcal{C}(\overline{\U})$, let
$$G[g](z)=\int_{\mathbb{U}} G(z,w) g(w) dm(w)$$
$|z|<1$ and $dm(w)$ denotes the Lebesgue measure in $\mathbb{U}$.\\

If we consider the function
 $$u(z):=P[\phi](z)-G[g](z), $$
 then $u$ satisfies the Poisson's equation
 $$
 \begin{cases}
 	\Delta u=g \mbox { on the disk }\mathbb{U},\\
 	\ds \lim_{r\to 1^-}u(re^{i\theta})=\phi(e^{i\theta}) \mbox{ a.e. on the circle.}
 	 \end{cases}
  $$

One can easily see that  the previous equation has  {\it non-unique} solution. Indeed, the Poisson kernel $P(z)=\ds \frac{1-|z|^2}{|1-z|^2}$ is a harmonic function on the unit disk and $\ds \lim_{r\to 1^-}P(re^{i\theta})=0$ a.e., but  $P \not = 0$. \\

It is well known that  if $\phi$ is continuous on the unit circle, then the harmonic function $P[\phi]$  extends continuously on $\mathbb{T}$ and equals to $\phi$ on $\mathbb{T}$, see H\"ormander \cite{hor}.\\

The following is a consequence of the maximum principle  for subharmonic functions.
\begin{theorem*}[Harmonic Majoration]
	
	Let $u$ be a subharmonic function  in $\mathcal{C}^2(\mathbb{U})\cap \mathcal{C}(\overline{\U})$. Then
	$$u \leq P[u\vert_{\mathbb{T}}] \mbox { on } \mathbb{U}.$$
\end{theorem*}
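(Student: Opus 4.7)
The plan is to subtract off the harmonic extension of the boundary values of $u$, thereby reducing the claim to the statement that a subharmonic function which vanishes on $\mathbb{T}$ is nonpositive on $\U$. That last statement is immediate from the maximum principle for subharmonic functions, which is flagged in the sentence immediately preceding the theorem.

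Concretely, since $u \in \mathcal{C}(\overline{\U})$, the restriction $\phi := u\vert_{\mathbb{T}}$ is continuous on $\mathbb{T}$, so by the property of the Poisson integral recalled just above the theorem (following H\"ormander), $h := P[\phi]$ is harmonic in $\U$, extends continuously to $\overline{\U}$, and satisfies $h = u$ on $\mathbb{T}$. Put $v := u - h$. Then $v \in \mathcal{C}^2(\U) \cap \mathcal{C}(\overline{\U})$, $\Delta v = \Delta u - \Delta h = \Delta u \geq 0$ on $\U$ so $v$ is subharmonic, and $v \equiv 0$ on $\mathbb{T}$. It suffices to show $v \leq 0$ on $\U$, and then $u \leq h = P[u\vert_{\mathbb{T}}]$ follows.

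To establish $v \leq 0$ I would invoke the maximum principle. By compactness of $\overline{\U}$ and continuity of $v$, the supremum $M := \sup_{\overline{\U}} v$ is attained at some $z^* \in \overline{\U}$. If $z^* \in \mathbb{T}$ then $M = v(z^*) = 0$ directly. If $z^* \in \U$, then the submean-value inequality $v(z_0) \leq \tfrac{1}{2\pi}\int_0^{2\pi} v(z_0 + re^{i\theta})\,d\theta$ (valid for all sufficiently small $r>0$ by integrating $\Delta v \geq 0$ over small disks) together with a standard open/closed argument on the connected set $\U$ forces $v \equiv M$ on $\U$; continuity on $\overline{\U}$ then gives $v \equiv M$ on $\mathbb{T}$, so again $M = 0$. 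In either case $v \leq 0$ throughout $\U$.

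I do not expect a real obstacle here: this is the textbook reduction of harmonic majorization to the maximum principle, and both ingredients — continuity of the Poisson integral of a continuous boundary datum and the strong maximum principle for $\mathcal{C}^2$ subharmonic functions on a connected domain — are standard. The only point needing a brief justification, should the authors want the argument self-contained, is the submean inequality for $\mathcal{C}^2$ subharmonic functions, which can be obtained by writing $\tfrac{d}{dr}\bigl(\tfrac{1}{2\pi}\int_0^{2\pi} v(z_0+re^{i\theta})\,d\theta\bigr) = \tfrac{1}{2\pi r}\int_{|z-z_0|<r}\Delta v\,dm \geq 0$ and integrating from $0$.
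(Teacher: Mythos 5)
Your argument is correct and is exactly the route the paper intends: the paper offers no written proof beyond the remark that the statement is a consequence of the maximum principle for subharmonic functions, and your reduction (subtract the Poisson extension of the continuous boundary data, observe the difference is subharmonic with vanishing boundary values, apply the maximum principle via the submean-value inequality) is the standard way of carrying that out. No gaps; the only ingredient you use beyond the maximum principle is the continuity up to the boundary of $P[\phi]$ for continuous $\phi$, which the paper itself records just before the theorem with a reference to H\"ormander.
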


{\it Notations:} For $b\in (-1,1)$, and $r\in[0,1]$ let us denote

$$A_b(r):=\fracc{1-r^2}{1+r^2}b+\fracc{4}{\pi} \arctan r,$$

$$B_b(r):=\fracc{1-r^2}{1+r^2}b-\fracc{4}{\pi} \arctan r.$$
 $$ M_b(r):=\fracc{4}{\pi} \arctan \fracc{a+r}{1+ar},$$
$$m_b(r):=M_b(-r)=\fracc{4}{\pi} \arctan \fracc{a-r}{1-ar},$$

where $$a=\tan \frac{b\pi}{4}.$$

Finally, if $f\in \mathbb{C}^2(\mathbb{U}) \cap \mathcal{C}(\overline{\mathbb{U}})$, then $f^*:=f\vert_{\mathbb{T}}$.

\section{The Schwarz Lemma for harmonic functions from $\mathbb{U}$ into $\mathbb{U}$}

The classical Schwarz Lemma (Heinz's theorem \cite {Heinz}) for  a harmonic mapping $f$ from the unit disc to the unit such that $f(0)=0$ is given by
$$|f(z)| \leq \frac{4}{\pi} \arctan |z|.$$
Moreover, this inequality is sharp for each point $z\in \mathbb{U}$.\\

Later, in 1977, H.  Hethcote \cite{heth}
improved the above result of Heinz by removing the assumption $f(0)=0$ and showed the following:

\begin{Thm} \cite{heth,Pavlovic}
If $f$ is a harmonic mapping from the unit disc to the unit disc, then
$$ \left|f(z) -\frac{1 - |z|^2}{1 + |z|^2} f(0)\right| \leq \frac{4}{\pi} \arctan |z|.$$
\end{Thm}

Then  B. Burgeth \cite{bu}   improved the above result of Heinz  and  Theorem \Alph{alphabet} for real valued  functions (it is easy to extend his result for complex valued  functions, see below) by removing the assumption $f(0)=0$.\\

First, we remark that the estimate of Theorem \Alph{alphabet}  cannot be sharp for all values $z$ in  the unit disc. Indeed,    let $f$ be a {\it real} harmonic functions with codomain $(-1,1)$ and  let $b=f(0)$. A
simple study of the function $A_b$, defined by  $A_b=\fracc{1-r^2}{1+r^2}b+\fracc{4}{\pi} \arctan r$, shows that if $b>\frac{2}{\pi}$, then there exists $r_0$ unique  in $(0,1)$  such that $A_b(r_0)=1$. Moreover  $\displaystyle\max_{r\in[0,1]}A_b(r)>1$. Similarity, for $b<-\frac{2}{\pi}$, there exists $r_1$ unique  in $(0,1)$  such that $B_b(r_1)=-1$   and $\displaystyle\min_{r\in[0,1]}B_b(r)<-1$, where $B_b(r):=\fracc{1-r^2}{1+r^2}b-\fracc{4}{\pi} \arctan r$.\\

Recently, M. Mateljevi\'c and M. Svetlik \cite{MmSm1} proved a Schwarz lemma for real harmonic functions with values in $(-1,1)$  using completely different  approach than B. Burgeth  \cite{bu}.

\begin{Thm}\cite{MmSm1}
Let $u:\mathbb{U}\to (-1,1)$ be a harmonic function such that $u(0)=b$. Then
$$m_b(|z|) \leq u(z) \leq M_b(|z|) \, \mbox{ for all } z\in \mathbb{U}. $$
 This inequality is sharp for each point $z\in \mathbb{U}$.
\end{Thm}

Let us recall that,  $ M_b(r)=\fracc{4}{\pi} \arctan \fracc{a+r}{1+ar}, m_b(r)=\fracc{4}{\pi} \arctan \fracc{a-r}{1-ar}$
and,

$a=\displaystyle \tan \frac{b\pi}{4}.$\\

Clearly this result improves   Theorem A  for real harmonic functions, since
\begin{prop}\label{prop1}
Let $b$ be a real number in $(-1,1)$. Then
\begin{equation}
M_b \leq A_b \quad \mbox{ and  }\quad  B_b \leq m_b \quad \mbox{ on } [0,1].
\end{equation}
\end{prop}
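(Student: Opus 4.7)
The plan is to combine the sharpness in Theorem~1.2 with the (non-sharp) bound of Theorem~A. The boundary values match trivially: $M_b(0)=\frac{4}{\pi}\arctan a=b=A_b(0)$ and $M_b(1)=\frac{4}{\pi}\arctan 1=1=A_b(1)$, so the inequality $M_b\le A_b$ holds with equality at the endpoints, and it suffices to establish it on $(0,1)$.

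Fix $r\in(0,1)$. The sharpness assertion in Theorem~1.2 produces a real harmonic function (or a sequence of such) $u:\mathbb{U}\to(-1,1)$ with $u(0)=b$ and $u(r)=M_b(r)$ (or converging to $M_b(r)$). Since $(-1,1)\subset\mathbb{U}$, each such $u$ is in particular a harmonic self-map of the disk, so Theorem~A applies and yields
\[
\left|u(r)-\frac{1-r^2}{1+r^2}b\right|\le\frac{4}{\pi}\arctan r.
\]
Dropping the absolute value gives $u(r)\le A_b(r)$; passing to a limit if necessary, $M_b(r)\le A_b(r)$. The companion inequality $B_b\le m_b$ follows from $M_{-b}\le A_{-b}$ (which is the first inequality applied to $-b\in(-1,1)$) via the two identities $m_b(r)=-M_{-b}(r)$ (from the oddness of arctan) and $B_b(r)=-A_{-b}(r)$ (by direct inspection of the formulas).

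The only delicate point is whether the sharpness in Theorem~1.2 is realized by an honest extremal $u$ or only approached by a sequence; in the latter case the weak inequality still survives the limit. If one prefers to avoid the appeal to sharpness, a direct calculus proof is available: setting $\alpha=\arctan a$ and $\rho=\arctan r$, the addition formula for tangent rewrites the claim as
\[
\arctan\frac{\sin(\alpha+\rho)}{\cos(\alpha-\rho)}\le \alpha\cos 2\rho+\rho,
\]
and a short computation shows that the $\alpha$-derivative of the difference of the two sides is proportional to $-\sin 2\alpha\,\sin 4\rho$, hence has sign opposite to $\alpha$ on $(-\pi/4,\pi/4)$; the claim then follows from the common value $\rho$ of both sides at $\alpha=0$.
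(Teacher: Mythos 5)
Your proposal is correct, and it in effect packages \emph{both} arguments that appear in the paper. Your primary route --- using the sharpness clause of the Mateljevi\'c--Svetlik harmonic Schwarz lemma (Theorem B) to identify $M_b(r)$ with $\sup\{u(r)\,:\,u \text{ harmonic},\ u(\mathbb{U})\subset(-1,1),\ u(0)=b\}$, and then bounding every such $u$ at $r$ by $A_b(r)$ via Theorem A --- is precisely the argument the paper itself sketches in the remark following Corollary \ref{cor1} (where $M_b^*\le A_b$ and $M_b^*=M_b$ are combined). The paper's official proof of Proposition \ref{prop1}, however, is the direct calculus one: fix $r$, set $\phi(a)=\arctan\frac{a(1-r^2)}{r^2+2ar+1}-\frac{1-r^2}{1+r^2}\arctan a$, and check that $\phi'(a)$ has sign opposite to $a$, so $\phi\le\phi(0)=0$. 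Your fallback computation in the variables $\alpha=\arctan a$, $\rho=\arctan r$ is that same monotonicity argument in disguise, and your derivative formula checks out: the $\alpha$-derivative of the difference equals $-\tfrac12\sin 2\alpha\,\sin 4\rho\,/\,(1+\sin 2\alpha\sin 2\rho)$, which vanishes only at $\alpha=0$ where both sides equal $\rho$. Your reduction of $B_b\le m_b$ to $M_{-b}\le A_{-b}$ via the identities $m_b=-M_{-b}$ and $B_b=-A_{-b}$ is correct and makes explicit a symmetry the paper leaves implicit. The one caveat worth noting: the soft route makes the proposition logically dependent on the sharpness assertion of Theorem B (imported from \cite{MmSm1}), whereas the calculus proof is self-contained; since the paper invokes Proposition \ref{prop1} precisely to argue that Theorem B improves Theorem A, the self-contained version is the cleaner choice, though there is no actual circularity either way.
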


That is for any $r\in [0,1)$ and any $b\in (-1,1)$, we have

$$ \fracc{4}{\pi} \arctan \fracc{a+r}{1+ar} \leq \fracc{1-r^2}{1+r^2}b+\fracc{4}{\pi} \arctan r, $$
and,

$$ \fracc{1-r^2}{1+r^2}b- \fracc{4}{\pi} \arctan r \leq \fracc{4}{\pi} \arctan \fracc{a-r}{1-ar},$$
where $a=\displaystyle\tan \frac{b\pi}{4}$.

\begin{proof}

Let us show the first inequality $ \fracc{4}{\pi} \arctan \fracc{a+r}{1+ar} \leq \fracc{1-r^2}{1+r^2}b+\fracc{4}{\pi} \arctan r$, that is:  $$  \arctan \fracc{a+r}{1+ar}-\arctan r \leq \fracc{1-r^2}{1+r^2}(b\fracc{\pi}{4}) $$ which is equivalent to
$$\arctan\left( \frac{a(1-r^2)}{r^2+2ar+1} \right) \leq \frac{1-r^2}{1+r^2} \arctan (a). $$

Let fix $r\in(0,1)$ and consider the mapping $$\phi(a)= \arctan\left( \frac{a(1-r^2)}{r^2+2ar+1} \right) - \frac{1-r^2}{1+r^2} \arctan (a).$$
The derivative of $\phi$ is equal to $$\phi'(a)=\dfrac{4r\left(r^2-1\right)a}{\left(r^2+1\right)\left(a^2+1\right)\left(\left(r^2+1\right)a^2+4ra+r^2+1\right)}.$$

Thus   $\phi$ has a maximum at $a=0$. As $\phi(0)=0$, we get  $\phi(a)\leq 0$. 	
	\end{proof}

Combining Theorem \Alph{alphabet} and Proposition \ref{prop1}, we get
\begin{cor}\label{cor1}
If $u:\U \to (-1,1)$ is harmonic, then
\begin{equation}\label{eq1.2}
u(z)-\frac{1 - |z|^2}{1 + |z|^2}\, b \leq M_b(|z|)-\frac{1 - |z|^2}{1 + |z|^2} \, b\leq \frac{4}{\pi} \arctan |z|.
\end{equation}
and
\begin{equation}\label{eq1.3}
u(z)-\frac{1 - |z|^2}{1 + |z|^2} \, b  \geq m_b(|z|)-\frac{1 - |z|^2}{1 + |z|^2} \, b \geq -\frac{4}{\pi} \arctan |z|.
\end{equation}

where $b=u(0)$.
\end{cor}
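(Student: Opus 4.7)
The plan is to chain the two results that have just been established: Theorem B of Mateljevi\'c--Svetlik gives the sharp two-sided bound
\[
m_b(|z|)\leq u(z)\leq M_b(|z|),
\]
and Proposition \ref{prop1} compares these sharp bounds with the functions $A_b$ and $B_b$ appearing in Hethcote's Theorem A. So the corollary should fall out by subtracting $\frac{1-|z|^2}{1+|z|^2}b$ throughout and invoking the definitions of $A_b$ and $B_b$.

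More concretely, I would first set $b=u(0)$ and apply Theorem B to $u$ (which is legitimate since $u$ is harmonic with values in $(-1,1)$). This yields $u(z)\leq M_b(|z|)$. Subtracting $\frac{1-|z|^2}{1+|z|^2}b$ from both sides immediately gives the left-hand inequality of \eqref{eq1.2}. For the right-hand inequality, I would invoke Proposition \ref{prop1}, which asserts $M_b(r)\leq A_b(r)$ on $[0,1]$, so that
\[
M_b(|z|)-\frac{1-|z|^2}{1+|z|^2}b \;\leq\; A_b(|z|)-\frac{1-|z|^2}{1+|z|^2}b \;=\; \frac{4}{\pi}\arctan|z|,
\]
where the final equality is simply the definition $A_b(r)=\frac{1-r^2}{1+r^2}b+\frac{4}{\pi}\arctan r$.

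The estimate \eqref{eq1.3} is completely symmetric: Theorem B yields $u(z)\geq m_b(|z|)$, subtracting the harmonic quantity $\frac{1-|z|^2}{1+|z|^2}b$ preserves the inequality, and the second half of Proposition \ref{prop1} gives $m_b(|z|)\geq B_b(|z|)$, hence
\[
m_b(|z|)-\frac{1-|z|^2}{1+|z|^2}b \;\geq\; B_b(|z|)-\frac{1-|z|^2}{1+|z|^2}b \;=\; -\frac{4}{\pi}\arctan|z|,
\]
by the definition of $B_b$.

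There is essentially no obstacle here: all the analytic work has already been done in Theorem B and Proposition \ref{prop1}. The only thing to be careful about is the bookkeeping of signs when subtracting the Poisson-type term $\frac{1-|z|^2}{1+|z|^2}b$ (which may be positive or negative depending on the sign of $b$), but since we are subtracting the \emph{same} quantity on both sides the direction of each inequality is preserved, so the corollary follows immediately.
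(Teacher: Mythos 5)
Your proposal is correct and is exactly the argument the paper intends: the corollary is stated as an immediate consequence of combining Theorem B with Proposition \ref{prop1}, and your chaining of $u\leq M_b$, $M_b\leq A_b$ (resp.\ $u\geq m_b$, $m_b\geq B_b$) after subtracting $\frac{1-|z|^2}{1+|z|^2}b$ is precisely that combination, spelled out. No gaps.
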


\begin{remark} One may obtain Proposition \ref{prop1}   using    Theorem  \Alph{alphabet}.

Indeed,
 for $b\in (-1,1)$, let ${\rm Har}(b)$    denote the family of all real  valued   harmonic maps
$u$ from $\mathbb{U}$ into $(-1,1)$ with $u(0)=b$.  For   $z\in \mathbb{U}$,    set   $M_b^*(z)=\sup \{u(z): u\in {\rm Har}(b) \}$.

If  $v \in {\rm Har}(b)$, then   we have   $v(r)\leq  A_b(r)$, we call  $A_b$ a majorant for  ${\rm Har}(b)$.
It is clear that  $ M_b^* \leq  A_b$.

From  the proof  of  Theorem 5 \cite{MmSm1}, it follows that   $M_b^*(r)=M_b(r)$ on $[0,1)$.  Hence  $ M_b \leq  A_b$  on $[0,1)$.\\
\end{remark}

Our first aim is to extend Theorem \Alph{alphabet} for  {\it complex} harmonic  functions from the unit disc to the unit disc, and to recover Theorem  A.

\begin{theorem} Let $f: \U \to \U$ be a harmonic function from the unit disc to the unit disc. Then
	$$ |f(z)| \leq M_b(|z|), \mbox { where } b=|f(0)|.$$
\end{theorem}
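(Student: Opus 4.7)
The plan is to reduce the complex-valued case to the real-valued case already handled by the Mateljevi\'c--Svetlik theorem, via a pointwise rotation trick.

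Fix an arbitrary $z_0 \in \U$ for which the bound must be verified. If $f(z_0)=0$ there is nothing to prove, so assume $f(z_0)\neq 0$ and set $\lambda = \overline{f(z_0)}/|f(z_0)|$, so that $|\lambda|=1$ and $\lambda f(z_0) = |f(z_0)|$. Define the real-valued auxiliary function
\[
u(z) := \re\bigl(\lambda f(z)\bigr), \qquad z\in\U.
\]
Then $u$ is real harmonic, and since $|u(z)|\le |\lambda f(z)| = |f(z)| < 1$, it takes values in $(-1,1)$. Moreover $u(z_0)= |f(z_0)|$ and $c := u(0) = \re(\lambda f(0))$ satisfies $|c|\le |f(0)|= b$, so in particular $c \le b$.

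Next, I apply the Mateljevi\'c--Svetlik theorem to $u$ with $u(0)=c$ to get
\[
|f(z_0)| = u(z_0) \leq M_c(|z_0|).
\]
To finish, I need monotonicity of $b \mapsto M_b(r)$ on $(-1,1)$. Writing $M_b(r)=\frac{4}{\pi}\arctan\frac{a+r}{1+ar}$ with $a=\tan(b\pi/4)$, one has $\frac{d}{da}\bigl(\frac{a+r}{1+ar}\bigr) = \frac{1-r^2}{(1+ar)^2} > 0$ for $r\in[0,1)$, and $b\mapsto a$ is itself increasing on $(-1,1)$. Hence $M_b(r)$ is increasing in $b$, and from $c\le b$ I conclude
\[
|f(z_0)| \leq M_c(|z_0|) \leq M_b(|z_0|).
\]

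The only subtle point is the choice of $\lambda$ depending on the target point $z_0$: it is exactly this pointwise rotation that converts a bound on one real-linear projection into a bound on the modulus, at the cost of obtaining a weaker initial value $c\le b$ for the associated real harmonic function. The monotonicity of $M_b$ in $b$ then absorbs this loss, and that is the only calculation that needs to be done beyond citing the earlier theorem.
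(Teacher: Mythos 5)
Your proof is correct and follows essentially the same route as the paper: the pointwise rotation $\lambda$ chosen so that $\lambda f(z_0)=|f(z_0)|$, application of the Mateljevi\'c--Svetlik theorem to $u=\re(\lambda f)$, and the monotonicity of $b\mapsto M_b(r)$ to pass from $u(0)$ to $|f(0)|$. You are somewhat more careful than the paper (handling $f(z_0)=0$ explicitly and verifying the monotonicity by a derivative computation rather than asserting it), but the argument is the same.
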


\begin{proof}
(1) Fix $z_0$ in the unit disc and choose unimodular $\lambda= e^{i \alpha}$ such that $\lambda f(z_0)>0$.

Define $u(z)=\Re (\lambda f(z))$;  $u(z_0)=|f(z_0)|$.

Hence, using Theorem \Alph{alphabet}, we get
$$|f(z_0)| \leq u(z_0) \leq M_{u(0)}(|z_0|) \leq M_{|f(0)|} (|z_0|).$$
As the mapping  $b\mapsto A_b(|z_0|)$ is increasing.
\end{proof}

\begin{remark}\hskip0.1\textwidth
	
\noindent One can easily recover the estimate  	$ \displaystyle \left|f(z) -\frac{1 - |z|^2}{1 + |z|^2} f(0)\right| \leq \frac{4}{\pi} \arctan |z|$ obtained in Theorem A.

Indeed,  let $u(z)=\Re(\lambda f(z))$. By equations \ref{eq1.2} and \ref{eq1.3}, we have
$$\ds \left|u(z) -\frac{1 - |z|^2}{1 + |z|^2} u(0)\right| \leq \frac{4}{\pi} \arctan |z|.$$
But
$$\ds u(z) -\frac{1 - |z|^2}{1 + |z|^2} u(0)= \Re \left(\lambda \displaystyle \left(f(z) -\frac{1 - |z|^2}{1 + |z|^2} f(0)\right)\right).$$ Now, one can choose $\lambda$, such that $\ds u(z) -\frac{1 - |z|^2}{1 + |z|^2} u(0)= \left|f(z) -\frac{1 - |z|^2}{1 + |z|^2} f(0)\right|$.
\end{remark}
\section{Schwarz Lemma for mappings with bounded Laplacian}
In   \cite{ChKa},   for a given continuous function  $g: G\rightarrow \mathbb{C}$, Chen and Kalaj  establish some Schwarz type Lemmas for mappings $f$  in $G$ satisfying the Poisson's equation $\Delta f=g$, where $G$ is a subset of the complex plane $\mathbb{C}$. Then they apply these results to obtain a Landau type theorem, which is a partial answer to the open problem in \cite{CP}.
Set  $A(z)= \displaystyle\frac{1-|z|^2}{1+|z|^2} $.\\

  We claim   that  Theorem 1  in  Chen and Kalaj paper   \cite{ChKa}  holds under additional conditions. Here we suggest  a modification of this result.

\begin{Thm}[]
	Let   $g\in C(\overline{\mathbb{U}})$ and $\phi \in L^{\infty}(\mathbb{T})$.
	Then the function
	$$f:= P[\phi]-G[g]$$
	 is $C^2(\mathbb{U})$ and satisfies the Poisson's equation
	 $$
	\begin{cases}
	\Delta f=g ,\\
    f\vert_{S^1}=\phi.
	\end{cases}
	$$
	
	In addition, we   have
	\begin{equation} \label{3.1}
    |f(z)-  P[\phi](0) A(z)|\leq \frac{4}{\pi} ||P[\phi]||_\infty \arctan |z| +  \frac{1}{4} ||g||_\infty (1-|z|^2),
    \end{equation}
    where $ ||P[\phi]||_\infty=\sup_{z\in \mathbb{U}}|P[\phi](z)|$ and $||g||_\infty =\sup_{z\in\mathbb{D}}|g(z)|.$ By $ f\vert_{S^1}=\phi$, we mean that $\ds \lim_{r \to 1^-} f(re^{i\theta})=\phi(e^{i\theta})$  a.e. \\
\end{Thm}
\begin{remark}
	Let $g\in C(\overline{\mathbb{U}})$ and $\phi \in C(\mathbb{T})$. Then the solutions of the Poisson's equation
	
	 $$
	\begin{cases}
	\Delta f=g ,\\
	f\vert_{S^1}=\phi,
	\end{cases}
	$$
	are given by 	$f:= P[\phi]-G[g]$, see \cite{hor} p. 118-120, and we have the estimate (\ref{3.1}). Here the boundary condition means that $\ds\lim_{z\in \U, z\to \xi} f(\xi)=\phi(\xi)$, for all $\xi \in \mathbb{T}$.  Thus  the version  stated in  Chen and Kalaj   (Theorem 1 in  \cite{ChKa})  holds in this setting.\\
\end{remark}


 Now   we    show that Theorem  \ref{th3} (the next result below) implies Theorem \Alph{alphabet}.

Let $f$ be a {\it real} valued  $ C^2(\mathbb{U})$ continuous on $\overline{\U}$,  such that $\Delta f=g$ and $f^*=f\vert_{\mathbb{T}}$. Let $K := ||P[f^*]||_\infty$. By Theorem \ref{th3}, we have
$$  m_{b/K}(|z|) K -   ||g||_\infty \frac{(1- |z|^2)}{4}   \leq f(z)\leq M_{b/K}(|z|) K +   ||g||_\infty \frac{(1- |z|^2)}{4},$$
where $b=P[f^*](0)$. Using Proposition \ref{prop1}, we get
  $$M_{b/K} (|z|)K  \leq \frac{1-|z|^2}{1+|z|^2}b+\frac{4K}{\pi}   \arctan |z|,$$
 $$m_{b/K}(|z|)K  \geq \frac{1-|z|^2}{1+|z|^2}b-\frac{4K}{\pi} \arctan |z|.$$
 Hence

   $$\left|f(z)-    \frac{1-|z|^2}{1+|z|^2} P[f^*](0) \right|\leq \frac{4}{\pi} ||P[f^*]||_\infty \arctan |z| +  \frac{1}{4} ||g||_\infty (1-|z|^2)$$

If $f$ is complex valued function, we consider $u=\Re (\lambda f)$, where $\lambda$ is a complex number of modulus $1$.

\begin{thm}\label{th3}

    Let $f$ be $ C^2(\mathbb{U})$ real-valued function, continuous on $\overline{\U}$.  Let $b=P[f^*](0)$, $c\in \mathbb{R}$ and $K$ is a positive number such that $K \geq ||P[f^*]||_\infty$, where $f^*=f\vert_{S^1}$.
\begin{itemize}
    \item[(i)] If $f$ satisfies    $\Delta f\geq - c$, then
     $$f(z)\leq K M_{b/K}(|z|) +   \frac{c}{4} (1- |z|^2).$$

   \item[(ii)] If $f$ satisfies $\Delta f\leq  c$, then
   $$ f(z) \geq K m_{b/K}(|z|) -   \frac{c}{4} (1- |z|^2).$$
\end{itemize}
\end{thm}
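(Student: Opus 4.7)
The plan is to exploit the fact that $\tfrac{c}{4}(1-|z|^2)$ is a $C^2$ function on $\overline{\mathbb{U}}$ whose Laplacian equals $-c$ (since $\Delta(1-|z|^2)=-4$) and which vanishes on $\mathbb{T}$. This lets us convert the one-sided Laplacian bound on $f$ into a subharmonicity statement for a closely related function, then combine harmonic majoration with the sharp estimate in Theorem B (Mateljevi\'c--Svetlik) for harmonic functions valued in $(-1,1)$.

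For part (i), I would set $u(z) := f(z)-\tfrac{c}{4}(1-|z|^2)$. A direct computation gives $\Delta u = \Delta f + c \geq 0$, so $u$ is subharmonic on $\mathbb{U}$. Moreover $u$ is continuous on $\overline{\mathbb{U}}$ and satisfies $u^* = f^*$ on $\mathbb{T}$. The harmonic majoration theorem recalled in the preliminaries therefore gives $u \leq P[f^*]$ on $\mathbb{U}$, that is,
\[
f(z) \leq P[f^*](z) + \frac{c}{4}(1-|z|^2).
\]
Next, since $K \geq \|P[f^*]\|_\infty$, the harmonic function $h := P[f^*]/K$ maps $\mathbb{U}$ into $[-1,1]$, and $h(0) = b/K$. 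Applying Theorem B to $h$ yields $h(z) \leq M_{b/K}(|z|)$, hence $P[f^*](z) \leq K\,M_{b/K}(|z|)$. Combining the two estimates gives the desired bound.

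Part (ii) is entirely symmetric: set $v(z) := f(z)+\tfrac{c}{4}(1-|z|^2)$, so that $\Delta v = \Delta f - c \leq 0$ and $-v$ is subharmonic with $(-v)^* = -f^*$. Harmonic majoration applied to $-v$ gives $v \geq P[f^*]$ on $\mathbb{U}$. Applying Theorem B to $h=P[f^*]/K$ again, this time through its lower bound $h(z) \geq m_{b/K}(|z|)$, produces $P[f^*](z) \geq K\,m_{b/K}(|z|)$, and rearranging yields the stated inequality.

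The only mildly delicate point, which I would address with a brief remark, is that Theorem B is phrased for harmonic maps into the open interval $(-1,1)$. If $K>\|P[f^*]\|_\infty$ this is automatic; if $K=\|P[f^*]\|_\infty$, the strong maximum principle implies either $|h(z)|<1$ strictly on $\mathbb{U}$, in which case Theorem B applies unchanged, or $h$ is a unimodular constant, forcing $b/K=\pm 1$, and a direct inspection shows that $M_{\pm 1}\equiv\pm 1$ and $m_{\pm 1}\equiv\pm 1$ on $[0,1)$, so both inequalities reduce to triviality. No serious obstacle arises; the main content of the argument is the clever choice of corrector $\pm\tfrac{c}{4}(1-|z|^2)$ that kills the Laplacian defect while leaving the boundary data untouched.
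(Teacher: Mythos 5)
Your proposal is correct and follows essentially the same route as the paper: you subtract/add the corrector $\frac{c}{4}(1-|z|^2)$ (the paper writes it as $f^0(z)=f(z)+\frac{c}{4}(|z|^2-1)$, which is the same function) to make the perturbed map subharmonic, apply harmonic majoration against $P[f^*]$, and then invoke the Mateljevi\'c--Svetlik bound for $P[f^*]/K$. Your closing remark on the borderline case $K=\|P[f^*]\|_\infty$ is a small point the paper passes over silently, but it does not change the argument.
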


\begin{proof}

(i) Define   $\displaystyle f^0(z)= f(z)  + \frac{c}{4} (|z|^2-1)$, and set $P[f^*](0)=b$.  Then   $f^0$ is subharmonic   and    $f^0 \leq P[f^*]$. As $\displaystyle \frac{1}{K} P[f^*]$ is a real harmonic function with codomain $(-1,1)$, by Theorem 5 in \cite{MmSm1}, we obtain $P[f^*](z) \leq K M_{b/K}(|z|)$. Thus
$$f(z)\leq K M_{b/K}(|z|) +   \frac{c}{4} (1- |z|^2), \mbox{ for all }  z\in \mathbb{U}.$$

(ii) If $f$ satisfies   $\Delta f\leq  c$, then     define   $\displaystyle f_0(z)= f(z)  - \frac{c}{4} (|z|^2-1)$, and set $P[f^*](0)=b$.
In a similar way, using Theorem 5 \cite{MmSm1}, we show that for all $z\in\mathbb{U}$, we get
 $$ f(z) \geq K m_{b/K}(|z|) -   \frac{c}{4} (1- |z|^2).$$
\end{proof}


\begin{cor}\label{cor4}
Suppose that $f: \U \to \U$,   $f \in C^2(\mathbb{U})$ and continuous on  $\overline{\mathbb{U}}$, and   $|\Delta f|\leq  c$ on $\mathbb{U}$ for some $c>0$. Then
$$|f(z)|\leq M_b(|z|) +   \frac{c}{4} (1- |z|^2),$$
 where $b=|P[f^*](0)|$.
\end{cor}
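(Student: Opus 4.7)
The plan is to reduce the complex-valued statement to the real-valued Theorem \ref{th3} by the standard device of multiplying $f$ by a well-chosen unimodular constant, exactly as in the short argument following Proposition \ref{prop1}. The key observation is that the hypotheses $|\Delta f|\le c$ and $f(\mathbb{U})\subset\mathbb{U}$ are both invariant under multiplication by any $\lambda\in\mathbb{C}$ with $|\lambda|=1$, so we are free to rotate $f$ so that its value at a prescribed point becomes real and nonnegative.

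Fix $z_0\in\mathbb{U}$; we may assume $f(z_0)\neq 0$. Choose $\lambda=\overline{f(z_0)}/|f(z_0)|$ and set $u(z):=\re(\lambda f(z))$. Then $u\in C^2(\mathbb{U})\cap C(\overline{\mathbb{U}})$ is real-valued with $|u(z)|\le|f(z)|<1$, so $u:\mathbb{U}\to(-1,1)$, and $\Delta u=\re(\lambda\Delta f)$ satisfies $|\Delta u|\le c$; in particular $\Delta u\ge -c$. Applying Theorem \ref{th3}(i) to $u$ with $K=1$, which is admissible because $\|P[u^*]\|_\infty\le\|u^*\|_\infty\le 1$, yields
\[
|f(z_0)|=u(z_0)\le M_{b'}(|z_0|)+\frac{c}{4}(1-|z_0|^2),
\]
where $b':=P[u^*](0)=\re(\lambda P[f^*](0))$.

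It remains to replace $b'$ by $b=|P[f^*](0)|$. Since $|b'|\le |\lambda|\cdot |P[f^*](0)|=b$ and $b\ge 0$, one has $b'\le b$, so the claim reduces to the monotonicity of $b\mapsto M_b(r)$ for fixed $r\in[0,1)$. This is the one computational step actually required; it follows at once from differentiating $(a+r)/(1+ar)$ with respect to $a=\tan(b\pi/4)$, which gives $(1-r^2)/(1+ar)^2>0$, so that $M_{b'}(|z_0|)\le M_b(|z_0|)$ and the asserted estimate holds at $z_0$. Since $z_0\in\mathbb{U}$ was arbitrary, the corollary follows. The only conceptual obstacle is a bookkeeping one, namely tracking the passage from $P[u^*](0)$ to $|P[f^*](0)|$; everything else is a mechanical transcription of the real-valued result via the rotation trick.
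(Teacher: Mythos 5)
Your proposal is correct and is essentially the paper's own argument: fix $z_0$, rotate by a unimodular $\lambda$ so that $\lambda f(z_0)=|f(z_0)|$, apply Theorem \ref{th3} to $u=\re(\lambda f)$ with $K=1$, and pass from $b'=P[u^*](0)$ to $b=|P[f^*](0)|$ via the monotonicity of $b\mapsto M_b(r)$. Your explicit verification of that monotonicity (the derivative $(1-r^2)/(1+ar)^2>0$) is a small improvement, since the paper's proof cites the monotonicity of $b\mapsto A_b$ where $M_b$ is what is actually used.
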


\begin{proof}
Fix $z_0$ in the unit disc and choose $\lambda$ (depends on $z_0$) such that $\lambda f(z_0)=|f(z_0)|$. Define
$u(z)=\Re (\lambda f(z))$ (we say that  $u$ is real valued  harmonic   associated to complex  valued  harmonic   $f$  at $z_0$).
We have $\Delta u= \Re(\lambda \Delta f)$. As $u$ is a real function with codomain $(-1,1)$ satisfying  $|\Delta u|\leq  c$, by Theorem \ref{th3}, we get
$$|u(z)|\leq M_{b_1}(|z|) +   \frac{c}{4} (1- |z|^2), \mbox{ where } b_1=P[u^*](0).$$
We have $b_1=P[u^*](0)=\Re(\lambda P[ f^*](0))\leq |P[f^*](0)|$.
Hence
$$|f(z_0)|\leq M_b(|z_0|) +   \frac{c}{4} (1- |z_0|^2),$$
where $b=|P[f^*](0)|$, as the mapping  $b\mapsto A_b(|z_0|)$ is increasing.
\end{proof}

Similarly, under the conditions of the previous theorem, we obtain
\begin{equation}\label{eq2.2}
\left|f(z)-  P[f^*](0) A(z)\right|\leq \frac{4}{\pi}  \arctan |z| +  \frac{c}{4} (1-|z|^2)
\end{equation}

\section{Boundary Schwarz Lemmas}

We establish  Schwarz Lemmas at the boundary for solutions to  $|\Delta f| \leq c $.  Our results are generalizations of Theorem 1.1 \cite{XJ} and Theorem 2 \cite{ChKa}.

\begin{thm}
	Suppose  $f\in C^2(\U)$, continuous on $\overline{\U}$  with codomain $(-1,1)$,    such that $\Delta f \geq -c$. If $f$ is differentiable at $z=1$ with  $f(1) =1$, then the following inequality holds:
	
$$f_x(1)\geq   \frac{2}{\pi} \tan\frac{\pi}{4}(1-b) -\frac{c}{2}.$$

where $$  b=P[f^*](0).$$\\
\end{thm}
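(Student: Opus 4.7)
The plan is to reduce the boundary statement to the interior estimate of Theorem \ref{th3}(i) evaluated along the radius through $1$, and then pass to the derivative at the boundary point.

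First I would fix the constant $K$ in Theorem \ref{th3}. Since $f$ takes values in $(-1,1)$ and is continuous on $\overline{\mathbb{U}}$, the boundary values $f^*$ satisfy $|f^*|\le 1$ a.e., so $\|P[f^*]\|_\infty\le 1$ by the maximum principle for harmonic functions. Thus $K=1$ is admissible, and Theorem \ref{th3}(i) applied to the hypothesis $\Delta f\ge -c$ yields
\begin{equation}\label{planpf1}
f(z) \le M_b(|z|) + \frac{c}{4}(1-|z|^2), \qquad z\in \mathbb{U},
\end{equation}
where $b=P[f^*](0)\in(-1,1)$.

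Next I would specialize \eqref{planpf1} to the radius $z=r\in[0,1)$ and use the boundary normalization. A direct check shows
$$M_b(1)=\frac{4}{\pi}\arctan\frac{a+1}{1+a}=\frac{4}{\pi}\cdot\frac{\pi}{4}=1,$$
so the right-hand side of \eqref{planpf1} equals $1$ at $r=1$, matching $f(1)=1$. Subtracting $f(r)$ from both sides and dividing by $1-r>0$ gives
$$\frac{f(1)-f(r)}{1-r}\ge \frac{M_b(1)-M_b(r)}{1-r}-\frac{c(1+r)}{4}.$$
Letting $r\to 1^-$, the differentiability of $f$ at $z=1$ forces the left-hand side to converge to $f_x(1)$, and the right-hand side to $M_b'(1)-\frac{c}{2}$, so
$$f_x(1)\ge M_b'(1)-\frac{c}{2}.$$

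The only remaining step is the computation of $M_b'(1)$. Differentiating $M_b(r)=\frac{4}{\pi}\arctan\frac{a+r}{1+ar}$ and simplifying gives
$$M_b'(r)=\frac{4}{\pi}\cdot\frac{1-a^{2}}{(1+ar)^{2}+(a+r)^{2}},$$
and at $r=1$ the denominator becomes $2(1+a)^2$, yielding
$$M_b'(1)=\frac{2}{\pi}\cdot\frac{1-a}{1+a}.$$
Since $a=\tan\frac{b\pi}{4}$, the tangent subtraction identity gives $\frac{1-a}{1+a}=\tan\!\bigl(\frac{\pi}{4}-\frac{b\pi}{4}\bigr)=\tan\frac{\pi}{4}(1-b)$, which combines with the previous inequality to give exactly the stated bound.

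I do not expect a genuine obstacle here: the heavy lifting was already done in Theorem \ref{th3}(i), and the boundary passage is just a one-sided difference quotient justified by the differentiability hypothesis at $z=1$. The only point requiring a tiny bit of care is writing $M_b'(1)$ in the closed form $\frac{2}{\pi}\tan\frac{\pi}{4}(1-b)$, which is a short trigonometric manipulation.
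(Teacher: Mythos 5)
Your proposal is correct and follows essentially the same route as the paper: apply Theorem \ref{th3}(i) along the radius, use $M_b(1)=1=f(1)$ to form the one-sided difference quotient, and compute $M_b'(1)=\frac{2}{\pi}\tan\frac{\pi}{4}(1-b)$. The only (welcome) addition is that you explicitly justify taking $K=1$ via $\|P[f^*]\|_\infty\le 1$, a point the paper leaves implicit.
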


Before giving the proof, one can easily  show that
$$M_b'(r)=\frac{4}{\pi}\left[\dfrac{1-a^2}{\left(a^2+1\right)r^2+4ar+a^2+1}\right]$$
Hence
$$M_b'(1)= \frac{2}{\pi} \left[\frac{1-a}{1+a}\right]=\frac{2}{\pi} \tan\frac{\pi}{4}(1-b).$$

As $a=\ds\tan \frac{b\pi}{4}.$

\begin{proof}
Since $f$ is differentiable at $z=1$, we know that
$$f(z)=1+f_z(1)(z-1)+f_{\bar{z}}(1)(\bar{z}-1)+ o(|z -1|).$$
 That is
 $$f_x(1)= \lim_{r \to 1^-} \frac{f(r)-1}{r-1}.$$

On the other hand, Theorem \ref{th3}(i) leads to

$$ 1-M_b(r)  -\frac{c}{4} (1- r^2)  \leq 1-f(r).$$

Dividing by $(1-r)$ and letting  $r\to 1^-$, we get

\begin{equation}\label{eq1}
f_x(1) \geq M_b'(1)-\frac{c}{2}.
\end{equation}
Thus
$$  f_x(1) \geq  \frac{2}{\pi} \tan\frac{\pi}{4}(1-b)  -\frac{c}{2}.$$
\end{proof}

\begin{cor}
	Suppose  $f\in C^2(\U)$, continuous on $\overline{\U}$ with codomain $(-1,1)$   and  differentiable at $z=1$ with  $f(1) =1$.
	\begin{itemize}
		\item[(i)]  If $\Delta f \geq -c$ then,
		$$f_x(1) \geq \displaystyle-b+\frac{2}{\pi}-\frac{c}{2}.$$
		\item[(ii)]  If  $|\Delta f| \leq c$ and $f(0)=0$, then $\displaystyle |b| \leq \frac{c}{4}$ and
		
		$$f_x(1) \geq -\frac{3}{4}c +\frac{2}{\pi}.$$
	\end{itemize}
	
	\end{cor}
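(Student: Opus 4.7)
My plan is to derive part (i) directly from the preceding theorem via Proposition \ref{prop1}, and to reduce part (ii) to part (i) after establishing the a priori bound $|b|\le c/4$ through the Green-function representation.

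For (i), the preceding theorem already yields $f_x(1)\ge M_b'(1)-\tfrac{c}{2}$, with the explicit formula $M_b'(1)=\tfrac{2}{\pi}\tan\tfrac{\pi}{4}(1-b)$ recorded just before its proof. To replace this by the simpler lower bound $\tfrac{2}{\pi}-b$, I would appeal to Proposition \ref{prop1}, which gives $M_b\le A_b$ on $[0,1]$. Since $M_b(1)=A_b(1)=1$, dividing $1-M_b(r)\ge 1-A_b(r)\ge 0$ by $1-r>0$ and passing to the limit $r\to 1^-$ yields $M_b'(1)\ge A_b'(1)$. A direct differentiation of $A_b(r)=\tfrac{1-r^2}{1+r^2}b+\tfrac{4}{\pi}\arctan r$ at $r=1$ gives $A_b'(1)=-b+\tfrac{2}{\pi}$, and chaining $f_x(1)\ge M_b'(1)-\tfrac{c}{2}\ge A_b'(1)-\tfrac{c}{2}=\tfrac{2}{\pi}-b-\tfrac{c}{2}$ completes (i).

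For (ii), the crucial new ingredient is the estimate $|b|\le \tfrac{c}{4}$. Using the representation $f=P[f^*]-G[\Delta f]$ recalled in the introduction, evaluation at the origin together with $f(0)=0$ gives $b=P[f^*](0)=G[\Delta f](0)$. Since $G(0,w)=-\tfrac{1}{2\pi}\log|w|$, a short polar-coordinate calculation reduces $\int_{\mathbb{U}} G(0,w)\,dm(w)$ to $\int_0^1 -r\log r\,dr=\tfrac{1}{4}$, whence $|b|\le \|\Delta f\|_\infty/4\le c/4$. Combining this with part (i), and using $-b\ge-|b|\ge-\tfrac{c}{4}$, yields $f_x(1)\ge \tfrac{2}{\pi}-\tfrac{c}{4}-\tfrac{c}{2}=\tfrac{2}{\pi}-\tfrac{3c}{4}$, as required.

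The one technical point I expect to require care is that the representation $f=P[f^*]-G[\Delta f]$ is stated in the introduction under the hypothesis $\Delta f\in C(\overline{\mathbb{U}})$, while the corollary only assumes $|\Delta f|\le c$ pointwise on $\mathbb{U}$ without control up to the boundary. I would bypass this by an exhaustion argument: apply the representation on the subdiscs $\{|z|\le r\}$ for $r<1$, use the uniform bound $|\Delta f|\le c$ together with dominated convergence to pass to the limit, and recover the identity $b=G[\Delta f](0)$ on which the bound $|b|\le c/4$ rests.
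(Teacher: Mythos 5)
Your part (i) is essentially the paper's own argument: combine $f_x(1)\ge M_b'(1)-\tfrac{c}{2}$ from the preceding theorem with $M_b'(1)\ge A_b'(1)=-b+\tfrac{2}{\pi}$, the latter obtained from $M_b\le A_b$ (Proposition \ref{prop1}) and $M_b(1)=A_b(1)=1$ by comparing difference quotients at $r=1$. (One inessential slip: your intermediate claim $1-A_b(r)\ge 0$ fails for $b>\tfrac{2}{\pi}$ on part of $[0,1]$, as the paper itself observes near Theorem A; but you never use it --- only $1-M_b(r)\ge 1-A_b(r)$ enters the limit.) In part (ii) you diverge from the paper in how you obtain $|b|\le \tfrac{c}{4}$: you invoke the representation $f=P[f^*]-G[\Delta f]$, compute $\int_{\U}G(0,w)\,dm(w)=\tfrac14$, and repair the boundary-regularity gap ($\Delta f$ is only assumed bounded on $\U$, not continuous on $\overline{\U}$) by exhausting with subdiscs. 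The paper instead reads the bound directly off Theorem \ref{th3} (equivalently, inequality (\ref{eq2.2}) at $z=0$): since $KM_{b/K}(0)=Km_{b/K}(0)=b$, Theorem \ref{th3} gives $|f(0)-P[f^*](0)|\le \tfrac{c}{4}$ with no representation formula and no limiting argument --- the subharmonicity of $f\pm\tfrac{c}{4}(|z|^2-1)$ together with harmonic majorization already encodes the Green-potential estimate. Your route is correct (the exhaustion does work: on $|z|\le\rho$ the mean of $f$ over $|z|=\rho$ differs from $f(0)$ by at most $c\rho^2/4$, and that mean tends to $P[f^*](0)$ as $\rho\to1^-$ by continuity of $f$ on $\overline{\U}$), but it re-derives machinery the paper has already packaged, at the cost of an extra technical step; citing Theorem \ref{th3} at the origin is shorter and sidesteps the regularity issue entirely. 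The final assembly $f_x(1)\ge \tfrac{2}{\pi}-|b|-\tfrac{c}{2}\ge \tfrac{2}{\pi}-\tfrac{3c}{4}$ agrees with the paper.
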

	
	\begin{proof}
	(ii)	Using $M_b \leq A_b$  and $M_b(1)=A_b(1)=1$, we get
		
		$$ M_b'(1) \geq A_b'(1) = -b +\frac{2}{\pi},$$
	(ii) The estimate $\displaystyle |b| \leq \frac{c}{4}$ follows directly from Theorem \ref{th3}.

	\end{proof}
	
	\begin{remark}
		One can also prove directly that $M_b'(1) \geq A_b'(1)$, that is
		\begin{equation}\label{conv}
		 \frac{2}{\pi} \tan\frac{\pi}{4}(1-b) \geq -b+\frac{2}{\pi} \mbox { for } b\in[0,1).
		\end{equation}
		Using the convexity of the tangent function, we get
		$$\tan x \geq 2(x-\frac{\pi}{4})+1 \mbox { for }  x\in[0,\pi/2).$$
		For $b\in [0,1)$, let us substitute  $x$ by $\frac{\pi}{4}(1-b)$, we obtain
		$$\frac{2}{\pi} \tan\frac{\pi}{4}(1-b) \geq -b+\frac{2}{\pi}.$$
	\end{remark}

	The following theorem is a generalization on Theorem 2 in \cite{ChKa} where the authors proved a Schwarz Lemma on the boundary for functions satisfying $\Delta f=g$ and under the assumption $f(0)=0$.
\begin{thm}
Suppose  $f\in \mathcal{C}^2(\U)\cap \mathcal{C}(\overline{\U})$ is a function of $\U$ into $\U$ satisfying satisfying $|\Delta f| \leq c$, where   $\ds 0\leq c<\frac{4}{\pi} \tan\frac{\pi}{4}(1-b) $. If  for some $\xi \in \mathbb{T}$, $\ds \lim_{r \to 1^-} |f(r\xi)|=1$, then

$$\liminf_{r \to 1^-} \frac{|f(\xi)-f(r\xi)|}{1-r} \geq  \frac{2}{\pi} \tan \frac{\pi}{4}(1-b) -\frac{c}{2}.$$

where $  b=|P[f^*](0)|.$\\

If, in addition, we assume that $f(0)=0$, then,
 $$\displaystyle |b| \leq \frac{c}{4}$$ and
$$\liminf_{r \to 1^-} \frac{|f(\xi)-f(r\xi)|}{1-r}  \geq -\frac{3}{4}c +\frac{2}{\pi}.$$
\end{thm}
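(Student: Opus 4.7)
The plan is to mirror the real-valued boundary Schwarz lemma from the preceding theorem, but run the argument through the modulus $|f(r\xi)|$ and invoke the complex version of the interior estimate (Corollary \ref{cor4}). The decisive observation is that since $f$ is continuous on $\overline{\U}$, the hypothesis $\lim_{r\to 1^-}|f(r\xi)|=1$ forces $|f(\xi)|=1$, so by the triangle inequality
\[
|f(\xi)-f(r\xi)|\;\geq\;|f(\xi)|-|f(r\xi)|\;=\;1-|f(r\xi)|.
\]
Thus it suffices to bound $\liminf_{r\to 1^-}\frac{1-|f(r\xi)|}{1-r}$ from below, and the direction of the modulus inequality is exactly the one that Corollary \ref{cor4} controls.

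Next I would apply Corollary \ref{cor4} with $z=r\xi$ to obtain
\[
|f(r\xi)|\;\leq\;M_b(r)+\tfrac{c}{4}(1-r^2),\qquad b=|P[f^*](0)|,
\]
and therefore
\[
\frac{1-|f(r\xi)|}{1-r}\;\geq\;\frac{1-M_b(r)}{1-r}-\frac{c(1+r)}{4}.
\]
Since $M_b(1)=1$, letting $r\to 1^-$ gives $\frac{1-M_b(r)}{1-r}\to M_b'(1)=\frac{2}{\pi}\tan\frac{\pi}{4}(1-b)$ by the derivative computation already performed before the previous theorem, while $\frac{c(1+r)}{4}\to\frac{c}{2}$. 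Combining with the triangle-inequality step yields the first claim. The hypothesis $c<\frac{4}{\pi}\tan\frac{\pi}{4}(1-b)$ simply ensures the resulting bound is positive and nontrivial.

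For the refinement under $f(0)=0$, the plan is to use the Poisson--Green decomposition $f=P[f^*]-G[g]$ with $g=\Delta f$, so that $P[f^*](0)=G[g](0)$. A direct estimate of the Green kernel at the origin,
\[
|G[g](0)|\;\leq\;\|g\|_\infty\int_{\U}\!\!\frac{1}{2\pi}\log\frac{1}{|w|}\,dm(w)\;=\;\frac{\|g\|_\infty}{4}\;\leq\;\frac{c}{4},
\]
gives $|b|\leq c/4$. Substituting into the first part and invoking the convexity inequality from the remark (\ref{conv}), namely $\frac{2}{\pi}\tan\frac{\pi}{4}(1-b)\geq -b+\frac{2}{\pi}$ for $b\in[0,1)$, we get
\[
\frac{2}{\pi}\tan\frac{\pi}{4}(1-b)-\frac{c}{2}\;\geq\;-b+\frac{2}{\pi}-\frac{c}{2}\;\geq\;-\frac{c}{4}+\frac{2}{\pi}-\frac{c}{2}\;=\;\frac{2}{\pi}-\frac{3c}{4},
\]
which is the second stated inequality.

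The only subtle point is making sure that the triangle-inequality reduction $|f(\xi)-f(r\xi)|\geq 1-|f(r\xi)|$ is legitimate, which requires the continuous extension of $f$ to $\overline{\U}$ together with the radial limit assumption at $\xi$; everything else is an application of Corollary \ref{cor4} and the elementary identity $M_b'(1)=\frac{2}{\pi}\tan\frac{\pi}{4}(1-b)$ already established. No new delicate estimate is needed.
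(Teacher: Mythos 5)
Your proof is correct and follows essentially the same route as the paper: the triangle inequality $|f(\xi)-f(r\xi)|\geq 1-|f(r\xi)|$ (justified by continuity on $\overline{\U}$ and the radial limit hypothesis), Corollary \ref{cor4} applied at $z=r\xi$, the identity $M_b'(1)=\frac{2}{\pi}\tan\frac{\pi}{4}(1-b)$, and the convexity inequality (\ref{conv}). The only (minor) divergence is in obtaining $|b|\leq \frac{c}{4}$: you compute $|G[g](0)|\leq \frac{1}{4}\|g\|_\infty$ directly from the Green kernel via the decomposition $f=P[f^*]-G[g]$, whereas the paper simply evaluates its inequality (\ref{eq2.2}) at $z=0$; both yield the same bound.
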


\begin{proof}
Using  Corollary  \ref{cor4}, we have
$$|f(\xi)-f(r\xi)|\geq 1-|f(r\xi)|\geq 1-M_b(r)-\frac{c}{4} (1-r^2).$$	
Thus
$$\liminf_{r \to 1^-} \frac{|f(\xi)-f(r\xi)|}{1-r} \geq \lim_{r \to 1^-} \frac{ 1-M_b(r)-\frac{c}{4} (1-r^2)}{1-r}=M_b'(1)-\frac{c}{2}.$$
The conclusion follows as $M_b'(1)= \ds\frac{2}{\pi} \tan\frac{\pi}{4}(1-b).$\\

If in addition, we assume that $f(0)=0$, using Equation \ref{eq2.2}, we obtain $\displaystyle |b|<\frac{c}{4}$. Hence

$$\liminf_{r \to 1^-} \frac{|f(\xi)-f(r\xi)|}{1-r} \geq   \frac{2}{\pi} \tan\frac{\pi}{4}(1-b)-\frac{c}{2} \geq -b+\frac{2}{\pi}-\frac{c}{2}\geq -\frac{3}{4}c +\frac{2}{\pi}.$$
The second estimates follows from inequality (\ref{conv}).
\end{proof}

\section*{Acknowledgement}

The  first author is partially supported by Ministry of Education, Science and Technological Development of the Republic of Serbia, Grant No. 174 032.

The second author would like to acknowledge the support provided by the Deanship of Scientific Research (DSR) at the King Fahd University of Petroleum and Minerals (KFUPM) for funding this work through project No IN161056.


\begin{thebibliography}{MM}

\bibitem{AzerOr0}   T. A. Azero\v glu and B. N. \"{O}rnek,
{\it  A refined Schwarz  inequality on  the boundary},   Complex Var.
Elliptic Equ. 58  (2013), no. 4, 571-577.

\bibitem{bu}   B. Burgeth, {\it  A Schwarz lemma for harmonic
	and  hyperbolic-harmonic functions in higher dimensions},
Manuscripta Math. {\bf 77} (1992), 283-291.


\bibitem{BK} D.M. Burns, S.G. Krantz, {\it Rigidity of holomorphic mappings and a new Schwarz lemma at the boundary}, J. Amer. Math. Soc. 7 (1994) 661-676.

\bibitem{Chen} H. H. Chen, {\it The Schwarz-Pick lemma for planar harmonic mappings}, Sci. China Math. 54 (2011), 1101–1118.


\bibitem{ChKa}
S. Chen, D. Kalaj,  {\it The Schwarz type Lemmas and the Landau type theorem of mappings satisfying Poisson's equations}, 	 arXiv:1708.03924 [math.CV]

\bibitem{CP} S. Chen and S. Ponnusamy, {\it Landau's theorem for solutions of the  $\partial$-equation in Dirichlet  type spaces }, Bull. Aust. Math. Soc., doi:10.1017/S0004972717000727, 2017.

\bibitem{CR} S. Chen,  A. Rasila, {\it Schwarz-Pick type estimates of pluriharmonic mappings in the unit polydisk}, Illinois J. Math.
Vol. 58, no. 4, (2014),  1015-1024

\bibitem{Heinz} E. Heinz, {\it On one-to-one harmonic mappings}, Pacific J. Math. 9 (1959) 101-105.
\bibitem{heth}  H.  W. Hethcote, {\it  Schwarz lemma analogues for harmonic functions},
Int. J.  Math.  Educ.   Sci.   Technol.,  Vol. 8, No. 1(1977), 65-67

\bibitem{hor}L. H\"ormander, Notions of convexity, Progress in Mathematics, Vol. 127, Birkh\"auser Boston Inc, Boston 1994.

\bibitem{kavu}  D. Kalaj and M.  Vuorinen, {\it  On harmonic functions and the Schwarz lemma},  Proc. Amer. Math. Soc. 140 (2012), no. 1, 161-165.

\bibitem{Khal}  A. Khalfallah,  {\it  Old and new invariant
	pseudo-distances defined by pluriharmonic functions},  Complex Anal. Oper. Theory (2015) 9:113-119.
\bibitem{krantz}   S. G. Krantz,  {\it   The Carath\'{e}odory and    Kobayashi Metrics  and Applications in Complex  Analysis},
arXiv:math/0608772v1  [math.CV]  31 Aug 2006.


\bibitem{MmSm1}   M.  Mateljevi\' c,  M. Svetlik, {\it  Hyperbolic metric on the strip and the Schwarz lemma for HQR mappings}, arXiv:1808.06647v1 [math.CV], http://arxiv.org/abs/1808.06647

\bibitem{mmjmaa2018}  M.  Mateljevi\' c,   {\it   Schwarz lemma and      Kobayashi Metrics for harmonic   and  holomorphic  functions}, J. Math. Anal. Appl. (2018),
https://doi.org/10.1016/j.jmaa.2018.03.069

\bibitem{osserman} R. Osserman, {\it A sharp Schwarz inequality on the boundary}, Proc. Amer. Math. Soc. 128 (2000) 3513-3517.

\bibitem{Pavlovic} M. Pavlovi\'c, Introduction to Function Spaces on the Disk, Mathematicki Institut SANU, Belgrade, 2004.


\bibitem{XJ} X. Wang, J.-F. Zhu, {\it Boundary Schwarz lemma for solutions to Poisson's equation}, J. Math. Anal. Appl. 463 (2018) 623-633

\bibitem{zhu} Jian-Feng Zhu, {\it Schwarz lemma and boundary Schwarz lemma for pluriharmonic
mappings}, manuscript December  2017, to appear in Filomat.


\bibitem{RgSchw1}
(a) https://www.researchgate.net/project/Schwarz-lemma-the-Caratheodory-and-\\Kobayashi-Metrics-and-Applications-in-Complex-Analysis\\
(b) https://www.researchgate.net/post/What\textunderscore are\textunderscore the\textunderscore most\textunderscore recent\textunderscore versions\textunderscore of\textunderscore\\ The\textunderscore Schwarz\textunderscore Lemma\\
(c) https://www.researchgate.net/publication/325430073\textunderscore Miodrag\textunderscore Mateljevic\textunderscore Rigidity\textunderscore of\textunderscore\\ holomorphic\textunderscore mappings\textunderscore Schwarz\textunderscore and\textunderscore Jack\textunderscore lemma

\bibitem{Bg_sem} M. Mateljevi\'{c},  {\it  Communications at Belgrade analysis seminar, University of Belgrade},  2017 and 2018.





\end{thebibliography}
\end{document}